\theoremstyle{plain}
\newtheorem{theorem}{Theorem}
\newtheorem*{conjecture*}{Conjecture}
\newtheorem{prop}[theorem]{Proposition}
\newtheorem{lemma}[theorem]{Lemma}
\newtheorem{cor}[theorem]{Corollary}
\newtheorem{thmintro}{Theorem}
\newtheorem{corintro}[thmintro]{Corollary}
\theoremstyle{definition}
\theoremstyle{remark}
\newtheorem{remark}[theorem]{Remark}
\newcommand{\rk}{{\operatorname{rk}}}
\newcommand{\sgn}{{\operatorname{sgn}}}
\newcommand{\cat}[1]{\mathsf{#1}}
\newcommand{\Db}{\cat{D}^{\mathrm{b}}}
\newcommand{\Br}{{\rm Br}}
\newcommand{\NS}{{\rm NS}}
\newcommand{\pullback}{{}^*}
\newcommand{\quadform}[1]{\langle #1 \rangle}
\newcommand{\lattice}[1]{\langle #1 \rangle}
\newcommand{\inner}[1]{b#1}
\newcommand{\dual}{^{\vee}}
\newcommand{\mor}[1]{\xrightarrow{#1}}
\newcommand{\tensor}{\otimes}
\newcommand{\isom}{\cong}
\newcommand{\wt}[1]{\widetilde{#1}}
\newcommand{\inv}{^{-1}}
\newcommand{\pf}{\mathrm{pf}}
\newcommand{\sheaf}[1]{\mathscr{#1}}
\newcommand{\ko}{\sheaf{O}}
\newcommand{\kc}{\sheaf{C}}
\newcommand{\kr}{\sheaf{R}}
\newcommand{\ZZ}{\mathbb{Z}}
\newcommand{\QQ}{\mathbb{Q}}
\newcommand{\RR}{\mathbb{R}}
\newcommand{\CC}{\mathbb{C}}
\newcommand{\FF}{\mathbb{F}}
\newcommand{\PP}{\mathbb{P}}
\newcommand{\cC}{\mathcal{C}}
\newcommand{\linedef}[1]{\emph{#1}}
\newcommand{\Proj}{\mathrm{Proj}}
\begin{document}

\title[Cubic fourfolds containing a plane and a quintic del Pezzo surface]{Cubic fourfolds containing a plane and a quintic del Pezzo surface}

\author[A.\ Auel]{Asher Auel}
\email{asher.auel@yale.edu}
\address{Department of Mathematics\\%
Yale University\\%
10 Hillhouse Avenue\\%
New Haven, CT 06511\\%
USA}

\author[M.\ Bernardara]{Marcello Bernardara}
\email{marcello.bernardara@math.univ-toulouse.fr}
\address{Institut de Math\'ematiques de Toulouse\\%
Universit\'e Paul Sabatier\\%
118 route de Narbonne\\%
31062 Toulouse Cedex 9\\%
France}

\author[M.\ Bolognesi]{Michele Bolognesi}
\email{michele.bolognesi@univ-rennes1.fr}
\address{Institut de Recherche Math\'ematique de Rennes\\%
Universit\'e de Rennes 1\\%
263 Avenue du G\'en\'eral Leclerc, CS 74205\\%
35042 Rennes Cedex\\%
France}

\author[A.\ V\'arilly-Alvarado]{Anthony V\'arilly-Alvarado}
\email{av15@rice.edu}
\address{Department of Mathematics\\%
Rice University MS 136\\%
6100 South Main Street\\%
Houston, TX 77005\\%
USA}

\classification{11E20, 11E88, 14C30, 14F05, 14E08, 14F22, 14J28, 15A66}

\keywords{cubic fourfold, quadric surface bundle, K3 surface,
rationality, derived category}

\thanks{Much of this work has been developed during visits of the
authors at the Max Planck Institut f\"ur Mathematik in Bonn,
Universit\"at Duisburg--Essen, Universit\'e Rennes 1, ETH Z\"urich,
and Rice University.  The hospitality of each institute is warmly
acknowledged.  The first author was partially supported by NSF grant
MSPRF DMS-0903039 and an NSA Young Investigator grant.  The second
author was partially supported by the SFB/TR 45 `Periods, moduli
spaces, and arithmetic of algebraic varieties'.  The fourth author was
partially supported by NSF grant DMS-1103659. The authors would like
to thank N.\ Addington, F.\ Charles, J.-L.\ Colliot-Th\'el\`ene, B.\
Hassett, M.-A.\ Knus, R.\ Laza, E.\ Macr\`\i, R.\ Parimala, P.\
Stellari, and V.\ Suresh for many helpful discussions.  Comments from
an anonymous referee, who is wholeheartedly thanked, enabled us to
improve the focus and clarity of this text.}

\begin{abstract} 
We isolate a class of smooth rational cubic fourfolds $X$ containing a
plane whose associated quadric surface bundle does not have a rational
section.  This is equivalent to the nontriviality of the Brauer class
$\beta$ of the even Clifford algebra over the K3 surface $S$ of degree
2 arising from $X$. Specifically, we show that in the moduli space of
cubic fourfolds, the intersection of divisors $\cC_8 \cap \cC_{14}$
has five irreducible components.  In the component corresponding to
the existence of a tangent conic, we prove that the general member is
both pfaffian and has $\beta$ nontrivial.  Such cubic fourfolds
provide twisted derived equivalences between K3 surfaces of degree 2
and 14, hence further corroboration of Kuznetsov's derived categorical
conjecture on the rationality of cubic fourfolds.
\end{abstract}

\maketitle

\section*{Introduction}
\label{sec:Introduction}

Let $X$ be a \linedef{cubic fourfold}, i.e., a smooth cubic
hypersurface $X \subset \PP^5$ over the complex numbers.  Determining
the rationality of $X$ is an open problem in algebraic geometry.
Some classes of rational cubic fourfolds have been described by Fano
\cite{fano}, Tregub \cite{tregub,tregub:new}, and Beauville--Donagi
\cite{beauville-donagi}.  In particular, \linedef{pfaffian} cubic
fourfolds, defined by pfaffians of skew-symmetric $6\times 6$ matrices
of linear forms in $\PP^5$, are rational; see
\cite[Prop.~5(ii)]{beauville-donagi}. A cubic fourfold
is pfaffian if and only if it contains a quintic del Pezzo surface;
see \cite[Prop.~9.2 a)]{beauville:determinantal}.

Hassett \cite{hassett:special-cubics} describes, via lattice theory,
Noether--Lefschetz divisors $\cC_d$ in the moduli space $\cC$ of cubic
fourfolds, defined by the existence of 2-cycles not equivalent to a
two dimensional linear section.  For example, $\cC_{14}$ is the
closure of the locus of pfaffian cubic fourfolds and $\cC_8$ is the
locus of cubic fourfolds containing a plane.  Certain of the divisors
$\cC_d$ consist of cubic fourfolds $X$ whose \linedef{nonspecial
cohomology} is isomorphic to a Tate twist of the primitive middle
cohomology of a polarized K3 surface $S$ of degree $d$; see
\cite[Thm.~1.0.2]{hassett:special-cubics}.
Such a K3 surface
$S$ is said to be \linedef{associated} to $X$.
A natural suspicion, supported by Hassett's work
\cite{hassett:special-cubics}, \cite{hassett:rational_cubic}, is that
any rational cubic fourfold ought to have an associated K3 surface.
For example, pfaffian cubic fourfolds 
have associated K3 surfaces of degree 14.  Hassett
\cite[Thm.~4.2]{hassett:rational_cubic} identifies countably many
divisors of $\cC_8$ consisting of cubic fourfolds containing a plane
whose \emph{Clifford invariant} is trivial, implying rationality.
Though lacking an associated K3 surface of degree 8, 
such cubic fourfolds do have associated K3 surfaces of other degrees.
The work of Hassett and Tschinkel
\cite[\S7]{hassett_tschinkel:rational_curves_homolomorphic_symplectic}
highlights the important role of effectivity of 2-cycles in such
rationality considerations.  While it is expected that the general
cubic fourfold (and the general cubic fourfold containing a plane) is
nonrational, at present not a single cubic fourfold is provably
nonrational.


In this work, we study rational cubic fourfolds in $\cC_8\cap\cC_{14}$
with nontrivial Clifford invariant, hence not contained in the
divisors of $\cC_8$ described by Hassett.  Let $A(X)$ be the lattice
of algebraic 2-cycles on $X$ up to rational equivalence and $d_X$ the
discriminant of the intersection form on $A(X)$.  Our main result is a
complete description of the irreducible components of $\cC_8 \cap
\cC_{14}$.

\begin{thmintro}
\label{thm:main}
The intersection of Noether--Lefschetz divisors $\cC_8 \cap \cC_{14}$
in the moduli space of cubic fourfolds has five irreducible components
indexed by the discriminant $d_X \in \{21,29,32,36,37\}$ of a general
member $X$.  The Clifford invariant of a general cubic fourfold $X$ in
$\cC_8 \cap \cC_{14}$ is trivial if and only if $d_X$ is odd.
Finally, the pfaffian locus is dense in the $d_X=32$ component.
\end{thmintro}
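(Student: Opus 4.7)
The plan is to invoke Hassett's lattice-theoretic description of the divisors $\cC_d$ to enumerate the saturated rank-$3$ sublattices $K \ni h^2$ of $A(X)$ that simultaneously contain $\cC_8$- and $\cC_{14}$-sublattices, and then to verify the Clifford and pfaffian assertions on each. Concretely, if $X \in \cC_8 \cap \cC_{14}$, then $A(X)$ contains $h^2$, a plane class $P$ with $h^2\cdot P = 1$ and $P^2 = 3$, and (after normalizing the generator of the $\cC_{14}$-lattice) a class $T$ with $h^2\cdot T = 5$ and $T^2 = 13$, the latter forced by $\det\langle h^2,T\rangle = 14$. The rank-$3$ sublattice $K_n := \langle h^2,P,T\rangle$ is thus determined by the single integer $n = P\cdot T$, and an elementary Gram-determinant computation gives
\[
\operatorname{disc} K_n \;=\; -3n^2 + 10n + 29.
\]

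To pin down the allowed values of $n$, I would combine positive-definiteness of the intersection form on $A(X) \otimes \RR \subset H^{2,2}(X,\RR)$ (automatic whenever $\operatorname{disc} K_n > 0$, by the Hodge--Riemann bilinear relations) with Hassett's criterion that $\cC_d$ is nonempty only for $d > 6$ and $d \equiv 0, 2 \pmod 6$, applied to every rank-$2$ saturated sublattice of $K_n$ containing $h^2$. The computations $\operatorname{disc}\langle h^2, T+P\rangle = 12 + 6n$ and $\operatorname{disc}\langle h^2, T-P\rangle = 32 - 6n$ rule out $n \leq -1$ and $n \geq 5$ respectively (each would force the nonexistent $\cC_6$ or $\cC_2$), leaving exactly $n \in \{0,1,2,3,4\}$ with discriminants $\{29,36,37,32,21\}$. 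Verifying Nikulin-type primitive-embedding admissibility of each $K_n$ in the cubic fourfold lattice and invoking surjectivity of the period map then produces a nonempty irreducible component for each, and the five components are pairwise distinct because their discriminants are.

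For the Clifford invariant, projection from $P$ realizes $\wt X \to \PP^2$ as a quadric surface bundle whose general fiber has class $h^2 - P$ on $X$. Since $\beta \in \Br(S)$ is $2$-torsion, a standard restriction--corestriction argument yields $\beta = 0$ if and only if $A(X)$ contains a class $D$ with $D\cdot(h^2-P)$ odd (an odd-degree multisection). On $K_n$, the pairing with $h^2 - P$ sends $h^2 \mapsto 2$, $P \mapsto -2$, $T \mapsto 5-n$, so modulo $2$ the map $D \mapsto D\cdot(h^2-P)$ factors through the coefficient of $T$ as multiplication by $5-n$. Such a $D$ therefore exists exactly when $5-n$ is odd, equivalently when $n$ is even, equivalently when $d_X = -3n^2+10n+29$ is odd.

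For density of the pfaffian locus in the $d_X = 32$ component ($n = 3$), openness of the pfaffian locus in $\cC_{14}$ and irreducibility of the component reduce the task to exhibiting a single smooth pfaffian cubic fourfold containing a plane $P$ and a quintic del Pezzo surface $T$ with $P\cdot T = 3$. Such an example can be constructed by fixing $P \subset \PP^5$ together with a generic quintic del Pezzo $T$ meeting $P$ transversely in three points, then checking that the generic cubic form vanishing on $P \cup T$ defines a smooth pfaffian fourfold whose algebraic lattice contains $K_3$. The principal obstacle lies precisely here: the lattice enumeration and parity calculation are essentially formal, but ensuring that on the general member of the $d_X = 32$ component the class $T$ is actually represented by a smooth quintic del Pezzo rather than a degenerate surface in the same homology class requires a concrete geometric construction or an explicit deformation argument.
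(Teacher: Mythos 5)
Your approach tracks the paper's quite closely in outline: parametrize the rank-$3$ sublattice $\langle h^2,P,T\rangle$ by the single integer $P\cdot T$, determine the admissible values, settle the Clifford parity by the intersection with $Q=h^2-P$, and reduce the pfaffian-density claim to exhibiting one example. You use a different normalization for the $\cC_{14}$ generator ($h^2\cdot T=5$, $T^2=13$, versus the paper's $4,10$; the two are related by $T\mapsto 3h^2-T$, so your $n$ corresponds to the paper's $\tau$ via $\tau=3-n$), and in particular your discriminant formula $-3n^2+10n+29$ and the paper's $-3\tau^2+8\tau+32$ agree under this change, and your parity criterion ``$n$ even'' matches the paper's ``$\tau$ odd.'' Your way of cutting the range down to five values is genuinely different and rather slick: the paper uses Sylvester's criterion together with an explicit search for short and long roots in $A_{\tau,0}$, while you instead apply Hassett's nonemptiness criterion ($\cC_d\ne\emptyset$ iff $d>6$ and $d\equiv 0,2\bmod 6$) to the rank-$2$ sublattices $\langle h^2,T\pm P\rangle$, whose discriminants $12+6n$ and $32-6n$ rule out $n\le -1$ and $n\ge 5$ in one stroke. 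This buys a cleaner elimination, at the mild cost of leaning on a stronger imported theorem of Hassett.

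There are, however, two genuine gaps. First, you assert that each $\cC_n$ is ``a nonempty irreducible component'' after ``verifying Nikulin-type primitive-embedding admissibility.'' Nonemptiness does reduce to such a check (the paper carries it out via Mayanskiy's conditions 1)--6) and Nikulin's signature formula), but irreducibility is a separate issue you do not address: \emph{a priori} a single $K_n$ could sit inside several non-isomorphic finite overlattices, each primitively embeddable, and each would give a distinct irreducible component with the same $\tau$. The paper closes this by showing $A_\tau$ admits no proper finite overlattice compatible with the primitivity of $\lattice{h^2}$, $K_8$, $K_{14}$ (squarefree discriminant for $\tau=-1,1,3$; long roots in the candidate overlattice of $A_{\tau,0}$ for $\tau=0,2$). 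Without this step, ``five irreducible components'' is not proved. Second, for the pfaffian density you propose building a cubic through a fixed plane $P$ and a quintic del Pezzo $T$ meeting $P$ in three points, then checking that the general such cubic is smooth, pfaffian, and has $A(X)$ containing $K_3$ primitively with rank exactly $3$ -- you rightly flag this last verification as the crux, and it is not supplied. The paper sidesteps the synthetic construction entirely: it writes down an explicit $6\times 6$ antisymmetric matrix of linear forms, verifies that the resulting pfaffian cubic contains the plane $\{x=y=z=0\}$ and has a tangent conic to the sextic degeneration curve, and then pins down $\rho(S)=2$ (hence $\rk A(X)=3$) by van Luijk's Frobenius-eigenvalue method on the reduction mod $3$, computed in \texttt{Magma}. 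The geometric characterization of the $d_X=32$ component as the ``tangent conic'' locus (Propositions~\ref{prop:conic} and~\ref{prop:tau0}) is what makes this computational route feasible; your configuration-of-surfaces route, while plausible, would still require some such independent certificate that the Picard rank does not jump.
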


In particular, the general cubic fourfold in the $d_X = 32$ component
of $\cC_8 \cap \cC_{14}$ is rational and has nontrivial Clifford
invariant; the existence of such a component substantiates an unproven assertion of Hassett~\cite[Rem.~4.3]{hassett:rational_cubic}.  We also provide a geometric description of this
component:\ its general member has a \emph{tangent conic} to the
sextic discriminant curve of the associated quadric surface bundle
(see Proposition~\ref{prop:tau0}).


More recently, Kuznetsov \cite{kuznetsov:cubic_fourfolds} has
established a semiorthogonal decomposition of the bounded derived
category $ \Db(X) = \langle \cat{A}_X, \ko_X, \ko_X(1),\ko_X(2)
\rangle$.  The category $\cat{A}_X$ is essentially a noncommutative
deformation of the derived category of a K3 surface.  Based on
evidence from known cases, as well as general categorical
considerations, Kuznetsov conjectures that
a cubic fourfold is rational if and only if
there is an equivalence $\cat{A}_X \isom \Db(S)$ for some K3 surface $S$.

Shortly after the first version of this article appeared, Addington
and Thomas \cite{AT_kuz=has} announced a groundbreaking result linking
the Hodge theoretic suspicion supported by Hassett's work and the
derived categorical conjecture of Kuznetsov,
at least for general cubic fourfolds.

If $X$ contains a plane, the projection from that plane makes $X$
birational to the total space of a quadric surface bundle $\wt{X} \to
\PP^2$ whose discriminant $D \subset \PP^2$ is a sextic curve.
Assuming that $D$ is smooth, the double cover $S \to \PP^2$ branched
along $D$ is a K3 surface of degree 2 and the even Clifford algebra
gives a Brauer class $\beta \in \Br(S)$, called the \linedef{Clifford
invariant} of $X$.  In~\cite[Thm.~4.3]{kuznetsov:cubic_fourfolds},
Kuznetsov establishes an equivalence $\cat{A}_X \isom \Db(S,\beta)$
with the bounded derived category of $\beta$-twisted sheaves on $S$.

By classical results in the theory of quadratic forms (see
\cite[Thm.~6.3]{knus_parimala_sridharan:rank_4}), $\beta$ is trivial
if and only if the quadric surface bundle $\wt{X} \to \PP^2$ has a
rational section (see also \cite{hassett:rational_cubic}), in which
case $X$ is rational.  On the divisors of $\cC_8$ described by
Hassett, $\beta \in \Br(S)$ is trivial and Kuznetsov's conjecture is
verified.  However, there is no example in the literature with $\beta$
nontrivial and for which there nevertheless exists a different K3
surface $S'$ and an equivalence $\Db(S,\beta) \isom \Db(S')$
explaining rationality of $X$.  One application of our results is the
existence of such examples.

\begin{corintro}
\label{thm:2}
Let $X$ be a general member of the $d_X=32$ component of $\cC_8 \cap
\cC_{14}$.  Then there exists a K3 surface $S'$ of degree 14 and a
nontrivial twisted derived equivalence $\Db(S,\beta) \isom \Db(S')$.
\end{corintro}

The existence of such cubic fourfolds is not \emph{a priori} clear.
First, while the locus of pfaffian cubic fourfolds is dense in
$\cC_{14}$, it is not true that the locus of pfaffians containing a
plane is dense in (all components of) $\cC_{8} \cap \cC_{14}$.  As for
the general study of quadric bundles over surfaces, there do exist
\emph{conic} bundles over surfaces, without rational sections, whose
total space is smooth projective rational:\ these are classified over
rational minimal surfaces \cite{shokurov} (see also
\cite{bernaboloconic}); over $\PP^2$ their discriminant curves have
degree at most five \cite{beauville:Prym}.  However, there is no
analogous classification of quadric surface bundles over surfaces with
smooth projective rational total space and without rational sections.
Indeed, there do not even exist any explicit examples in the
literature.  Our results provide such examples arising from cubic
fourfolds containing a plane.

The structure of this paper is as follows.  In \S\ref{sec:lattices},
we study Hodge theoretic and geometric conditions for the
nontriviality of the Clifford invariant (see
Proposition~\ref{prop:nontriviality} and Corollary~\ref{prop:conic}).
In \S\ref{sec:sanity_check}, we analyze the irreducible components of
$\cC_8 \cap \cC_{14}$, proving the first two statements of
Theorem~\ref{thm:main}.  Throughout, we use the work of
Looijenga~\cite{looijenga} and Laza~\cite{laza}, as adapted by
Mayanskiy~\cite{lattices_mayanskiy}, on the realizability of lattices
of algebraic cycles on a cubic fourfold.  In \S\ref{sec:hpd}, we
recall some elements of the theory of homological projective duality
and prove Corollary~\ref{thm:2}.  Finally, in \S\ref{sec:example}, we
prove the final statement of Theorem~\ref{thm:main}, that the pfaffian
locus is dense in the $d_X=32$ component of $\cC_8 \cap \cC_{14}$, by
expliciting a single point in the intersection.  For the verification,
we are aided by \texttt{Magma}~\cite{magma}, adapting some of the
computational techniques developed in \cite{hassett_varilly:K3}.

\section{Nontriviality criteria for Clifford invariants}
\label{sec:lattices}

In this section, by means of straightforward lattice-theoretic
calculations, we describe a class of cubic fourfolds containing a
plane with nontrivial Clifford invariant.

If $(H,\inner{})$ is a $\ZZ$-lattice and $A \subset H$ is a
sublattice, then the orthogonal complement $A^{\perp} = \{v\in
H\,:\,\inner{(v,A)}=0\}$ is a \linedef{saturated} sublattice (i.e.,
$A^{\perp} = A^{\perp}\tensor_{\ZZ}\QQ \cap H$) and is thus a
\linedef{primitive} sublattice (i.e., $H/A^{\perp}$ is torsion free).
Denote by $d(H,\inner{})\in\ZZ$ the \linedef{discriminant}, i.e., the
determinant of a Gram matrix.

Let $X$ be a smooth cubic fourfold over $\CC$.  The integral Hodge
conjecture holds for $X$ (by \cite[Thm.~18]{voisin:aspects}, following
\cite{murre} and \cite{zucker}) and we denote by $A(X) = H^4(X,\ZZ)
\cap H^{2,2}(X)$ the lattice of integral middle Hodge classes; it
coincides with the Chow group of codimension 2 cycles up to rational
equivalence.

Now suppose that $X$ contains a plane $P$ and let $\pi : \wt{X} \to
\PP^2$ be the quadric surface bundle defined by blowing up and
projecting away from $P$.  Let $\kc_0$ be the even Clifford algebra of
$\pi$, cf.\ \cite{kuznetsov:quadrics} or \cite[\S1.5]{ABB:fibrations}.
We call the plane $P$ \linedef{good} if $\pi$ has \linedef{simple
degeneration}, i.e., the fibers of $\pi$ have at most isolated
singularities.  This is equivalent to $X$ not containing another plane
intersecting $P$; see \cite[\S1~Lemme~2]{voisin}.  This is also
equivalent to the smoothness of the discriminant divisor $D \subset
\PP^2$ (see \cite[Prop.~1.2.5]{ABB:fibrations}), which is sextic
curve.  In this case, the \linedef{discriminant cover} $f : S \to
\PP^2$ branched along $D$ is a smooth K3 surface of degree 2, and that
$\kc_0$ defines an Azumaya quaternion algebra over $S$, cf.\
\cite[Prop.~3.13]{kuznetsov:quadrics}.  We refer to the Brauer class
$\beta \in \Br(S)[2]$ of $\kc_0$ as the \linedef{Clifford invariant}
of $X$.

Let $h \in H^2(X,\ZZ)$ be the hyperplane class defined by the
embedding $X \subset \PP^5$. The \linedef{transcendental} lattice
$T(X)$, the \linedef{nonspecial cohomology} lattice $K$, and the
\linedef{primitive cohomology} lattice $H^4(X,\ZZ)_0$ are the
orthogonal complements (with respect to the cup product polarization
$\inner{}_X$) of $A(X)$, $\lattice{h^2, P}$, and $\lattice{h^2}$
inside $H^4(X,\ZZ)$, respectively.  Thus $T(X) \subset K \subset
H^4(X,\ZZ)_0$, with $T(X)=K$ for a very general cubic fourfold
containing a plane, cf.\ the proof of \cite[\S1~Prop.~2]{voisin}.  There
are polarized Hodge structures on $T(X)$, $K$, and $H^4(X,\ZZ)_0$
by restricting from $H^4(X,\ZZ)$.

Similarly, let $S$ be a smooth integral projective surface over $\CC$
and $\NS(S) = H^2(S,\ZZ) \cap H^{1,1}(S)$ its N\'eron--Severi lattice.
Let $h_1 \in \NS(S)$ be a fixed anisotropic class.  The
\linedef{transcendental} lattice $T(S)$ and the \linedef{primitive
cohomology} $H^2(S,\ZZ)_0$ are the orthogonal complements (with
respect to the cup product polarization $\inner{}_S$) of $\NS(S)$ and
$\lattice{h_1}$ inside $H^2(S,\ZZ)$, respectively.  If $f : S \to
\PP^2$ is a double cover, then we take $h_1$ to be the class of $f^*
\ko_{\PP^2}(1)$.

Let $F(X)$ be the Fano variety of lines in $X$ and $W \subset F(X)$
the divisor consisting of lines meeting $P$.  Then $W$ is identified
with the relative Hilbert scheme of lines in the fibers of $\pi$.  Its
Stein factorization $W \mor{p} S \mor{f} \PP^2$ displays $W$ as a
smooth conic bundle over the discriminant cover. The Abel--Jacobi map
$$
\Phi : H^4(X,\ZZ) \to H^2(W,\ZZ) 
$$ 
is an isomorphism of $\QQ$-Hodge structures $\Phi : H^4(X,\QQ) \to
H^2(W,\QQ)(-1)$; see \cite[\S1~Prop.~1]{voisin}. Finally, there is an
injective (see \cite[Lemma~7.28]{voisin:Hodge_I}) morphism $p^* :
H^2(S,\ZZ) \to H^2(W,\ZZ)$ of polarized Hodge structures.  Voisin
\cite[\S1~Prop.~2]{voisin} proves that $\Phi (K) \subset p\pullback
H^2(S,\ZZ)_0(-1)$ is a polarized Hodge substructure of index 2.  Here,
the Tate twist $(-1)$ increases the weight by 2 and changes the sign
of the bilinear form.  We have the following amplification.

\begin{prop}
\label{claim}
Let $X$ be a smooth cubic fourfold containing a good plane.  Then
we have that $\Phi(T(X)) \subset p\pullback T(S)(-1)$ is a polarized Hodge
substructure of index $\epsilon$ dividing 2.  In particular, $\rk\,
A(X) = \rk\, \NS(S) + 1$ and $d(A(X)) = 2^{2(\epsilon-1)} d(\NS(S))$.
\end{prop}
\begin{proof}
Using \cite[\S1~Lemme~3]{voisin} it is not hard to check that
$\Phi(T(X)) \subset p\pullback T(S)(-1)$; it remains to compute the
index of this inclusion.  Since $T(X) \subset K$ and $T(S)(-1) \subset
H^2(S,\ZZ)_0(-1)$ are saturated (hence primitive) sublattices, an
application of the snake lemma shows that
$$p\pullback T(S)(-1)/\Phi(T(X)) \subset p\pullback
H^2(S,\ZZ)_0/\Phi(K) \isom \ZZ/2\ZZ,$$
hence the index of $\Phi(T(X))$ in $p\pullback T(S)(-1)$ divides 2.

We now verify the final claims. We have $\rk\, K = \rk\, H^2(X,\ZZ) -
2 = \rk\, T(X) + \rk\, A(X) - 2$ and $\rk\, H^2(S,\ZZ)_0 = \rk\,
H^2(S,\ZZ) - 1 = \rk\, T(S) + \rk\, \NS(S) - 1$ (since $P$, $h^2$, and
$h_1$ are anisotropic vectors, respectively), while $\rk\, K = \rk\,
H^2(S,\ZZ)_0$ and $\rk\, T(X) = \rk\, T(S)$ by
\cite[\S1~Prop.~2]{voisin} and the above, respectively.  The
discriminant calculation follows from standard lattice theory.
\end{proof}

If $S$ has Picard rank 1, then $X$ has no associated K3 surface in the
sense of Hassett.  Thus, if we are looking for rational cubic
fourfolds, we must consider $S$ with Picard rank at least 2.  Let $Q
\in A(X)$ be the class of a fiber of $\pi : \wt{X} \to \PP^2$.  Then
$P + Q = h^2$, see \cite[\S1]{voisin}.

\begin{prop}
\label{prop:nontriviality}
Let $X$ be a smooth cubic fourfold containing a good plane $P$.  If
$A(X)$ has rank 3 and even discriminant (e.g., if the K3 surface $S$
of degree 2 has Picard rank 2 and even N\'eron--Severi discriminant)
then the Clifford invariant $\beta \in \Br(S)$ of $X$ is nontrivial.
\end{prop}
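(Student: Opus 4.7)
The plan is to translate triviality of $\beta$ into a concrete parity condition on $A(X)$ and then to rule out that condition whenever $\rk A(X) = 3$ and $d(A(X))$ is even. The crucial input from the theory of quadric surface bundles is the equivalence, recalled just before the statement, between triviality of $\beta$ and existence of a rational section of $\pi : \wt{X} \to \PP^2$. Combining this with Springer's theorem on quadratic forms (which upgrades an odd-degree multisection to an honest section) and with the blowup formula $A(\wt{X}) = b^{*} A(X) \oplus \ZZ \cdot [\ell_E]$, where $\ell_E$ is a fiber of the exceptional divisor $E \to P$ and satisfies $[\ell_E] \cdot Q = 0$, I would deduce the following cycle-theoretic criterion: $\beta$ is trivial if and only if there exists a class $T \in A(X)$ with $T \cdot Q$ odd.

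Next I would bring in the numerical intersection numbers $h^{4} = 3$, $h^{2} \cdot P = 1$, $P^{2} = 3$, which give $h^{2} \cdot Q = 2$ and $P \cdot Q = -2$. These imply that the homomorphism $\phi : A(X) \to \ZZ/2\ZZ$ given by $\phi(T) = T \cdot Q \bmod 2$ already vanishes on the rank-two sublattice $\lattice{h^{2}, P}$, which has discriminant $8$. What remains is to show that $\phi$ vanishes on all of $A(X)$ under the hypotheses. For this I would split into two cases according to whether $\lattice{h^{2}, P}$ is saturated in $A(X)$; since its saturation $L$ satisfies $[L : \lattice{h^{2}, P}]^{2} \cdot d(L) = 8$, the index is either $1$ or $2$.

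In the saturated case, $A(X)$ has a $\ZZ$-basis of the form $\{h^{2}, P, T\}$, and a routine reduction of the $3 \times 3$ Gram determinant modulo $2$ yields $d(A(X)) \equiv (h^{2} \cdot T) + (P \cdot T) \equiv T \cdot Q \pmod{2}$, so even discriminant forces $T \cdot Q$ even and hence $\phi = 0$. In the unsaturated case, a short inspection of the discriminant group of $\lattice{h^{2}, P}$ pins down the unique extra primitive class as $v = (h^{2} + P)/2$; then $Q = 2(h^{2} - v) \in 2 A(X)$, and so $\phi \equiv 0$ holds automatically, without needing the discriminant hypothesis at all. The parenthetical statement about the K3 surface $S$ is then immediate from Corollary \ref{claim}, which translates the rank and parity conditions on $\NS(S)$ into the corresponding conditions on $A(X)$ via $\rk A(X) = \rk \NS(S) + 1$ and $d(A(X)) = 2^{2(\epsilon - 1)} d(\NS(S))$ with $\epsilon \in \{1,2\}$.

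The main obstacle I anticipate is the cycle-theoretic criterion in the first step: extracting from an abstract rational section of $\pi$ an integral class in $A(X)$ with odd intersection with $Q$ requires combining Springer's theorem with the blowup formula and carefully checking that the exceptional line $\ell_E$ contributes trivially to intersections with a general fiber. Once that is in hand, the remaining arguments reduce to elementary mod-$2$ lattice bookkeeping, the only subtle point being not to overlook the unsaturated overlattice, which takes care of itself.
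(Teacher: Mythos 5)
Your proof is correct and follows essentially the same route as the paper's: both reduce, via the Knus--Parimala--Sridharan triviality criterion and Hassett's characterization of rational sections, to the nonexistence of a class in $A(X)$ with odd intersection against the fiber class $Q$, and then both extract a contradiction from a Gram determinant parity computation. The only real difference is the endgame: given a hypothetical $R$ with $R\cdot Q = 1$, the paper computes $d(\langle h^2, Q, R\rangle) \equiv 5 \pmod 8$ directly and compares with the even $d(A(X))$, which sidesteps your case split on whether $\langle h^2, P\rangle$ is saturated in $A(X)$; your mod-$2$ identity $d(A(X)) \equiv T\cdot Q \pmod 2$ for a completing basis vector $T$ is the same parity obstruction in a different basis, and your handling of the unsaturated case (where $Q = 2(h^2 - v)$ makes $\phi$ vanish for free) is a correct, if ultimately unnecessary, safety net.
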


\begin{proof}
The Clifford invariant $\beta \in \Br(S)$ of the quadric
surface bundle $\pi : \wt{X} \to \PP^2$ is trivial if and only if
$\pi$ has a rational section; see
\cite[Thm.~6.3]{knus_parimala_sridharan:rank_4} or
\cite[2~Thm.~14.1,~Lemma~14.2]{scharlau:book}.  Such a section exists
if and only if there exists an algebraic cycle $R \in A(X)$ such that
$R.Q=1$; see \cite[Thm.~3.1]{hassett:rational_cubic} or
\cite[Prop.~4.7]{kuznetsov:cubic_fourfolds}.

Suppose that such a cycle $R$ exists and consider the sublattice
$\lattice{h^2, Q, R} \subset A(X)$. It is straightforward to see that
its intersection form has a Gram matrix whose determinant is congruent
to 5 modulo 8 for any possible choice of $R$ (cf.\
\cite[Lemma~4.4]{hassett:rational_cubic}), so this lattice cannot be a
finite index sublattice of $A(X)$, which has even discriminant by
hypothesis.  Hence no such 2-cycle $R$ exists and thus $\beta$ is
nontrivial. The claim follows from Proposition~\ref{claim}.
\end{proof}

We now provide an explicit geometric condition for the nontriviality
of the Clifford invariant, which will be necessary in
\S\ref{sec:example}.  We say that a cubic fourfold $X$ containing a
plane has a \linedef{tangent conic} if there exists a conic $C \subset
\PP^2$ everywhere tangent to the discriminant curve $D\subset \PP^2$
of the associated quadric surface bundle.

\begin{cor}
\label{prop:conic}
Let $X$ be a smooth cubic fourfold containing a good plane.  If $X$
has a tangent conic and the K3 surface $S$ has Picard rank 2 then the
Clifford invariant $\beta \in \Br(S)$ of $X$ is nontrivial.
\end{cor}
\begin{proof}
Consider the pull back of the cycle class of $C$ to $S$ via the
discriminant double cover $f : S\to \PP^2$.  Then $f^* C$ has two
components $C_1$ and $C_2$.  The sublattice of $\NS(S)$ generated by
$h_1 = f\pullback\ko_{\PP^2}(1) = (C_1 + C_2)/2$ and $C_1$ has
discriminant $-8$.  As $S$ has Picard rank 2, the classes $h_1$ and
$C_1$ generate $\NS(S)$ (see \cite[\S2]{elsenhal-janhel:k3} for
further details). Now apply Proposition~\ref{prop:nontriviality}.
\end{proof}

\section{The Clifford invariant on $\cC_8 \cap\cC_{14}$}
\label{sec:sanity_check}

In this section, we first prove that $\cC_8 \cap \cC_{14}$ has five
irreducible components and we describe each of them in lattice
theoretic terms.  We then completely analyze the (non)triviality of
the Clifford invariant of the general cubic fourfold (i.e., such that
$A(X)$ has rank 3) in each irreducible component.  One of the
components corresponds to cubic fourfolds containing a plane and
having a tangent conic (i.e., those considered in
Corollary~\ref{prop:conic}), where we already know the nontriviality
of the Clifford invariant.  Another component corresponds to cubic
fourfolds containing two disjoint planes, where we already know the
triviality of the Clifford invariant.  There are another two
components of $\cC_8 \cap \cC_{14}$ whose general elements have
trivial Clifford invariant (see Proposition~\ref{oddity}).  
\medskip

A cubic fourfold $X$ is in $\cC_8$ or $\cC_{14}$ if and only if $A(X)$
has a primitive sublattice $K_8=\lattice{h^2,P}$ or
$K_{14}=\lattice{h^2,T}$.  This follows from the definition of
$\cC_d$, and because for any $d \not\equiv 0 \bmod 9$ there is a
unique lattice (up to isomorphism) of rank 2 that represents 3 and has
discriminant $d$.

Thus a cubic fourfold $X$ in $\cC_8 \cap \cC_{14}$ has
a sublattice $\lattice{h^2, P, T} \subset A(X)$ with Gram
matrix
\begin{equation}
\label{eq:hypothetical}
\begin{array}{cccc}
 & h^2 & P & T\\
h^2 & 3 & 1 & 4\\
P & 1 & 3 & \tau \\
T & 4 & \tau & 10
\end{array}
\end{equation}
for some $\tau \in \ZZ$ depending on $X$.  There may be \emph{a
priori} restrictions on the possible values of $\tau$.  

Denote by $A_\tau$ the lattice of rank 3 whose bilinear form has Gram
matrix \eqref{eq:hypothetical}.  We will write $\cC_\tau =
\cC_{A_\tau} \subset \cC$ for the locus of smooth cubic fourfolds such
that there is a primitive embedding $A_\tau \subset A(X)$ of lattices
preserving $h^2$.  If nonempty, each $\cC_\tau$ is a subvariety of
codimension 2 by a variant of the proof of
\cite[Thm.~3.1.2]{hassett:special-cubics}.

We will use the work of Laza~\cite{laza}, Looijenga~\cite{looijenga},
and Mayanskiy~\cite[Thm.~6.1]{lattices_mayanskiy} to classify exactly
which values of $\tau$ are supported by cubic fourfolds. This proves
the first assertion in Theorem~\ref{thm:main}.  

\begin{theorem}
\label{theorem:comps}
The irreducible components of $\cC_8 \cap \cC_{14}$ are the
subvarieties $\cC_\tau$ for $\tau \in \{-1,0,1,2,3\}$.  Moreover, the
general cubic fourfold $X$ in $\cC_\tau$ satisfies $A(X)\isom A_\tau$.
\end{theorem}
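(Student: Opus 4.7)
The plan is to determine exactly which integers $\tau$ give rise to a nonempty locus $\cC_\tau \subset \cC$. Any $X \in \cC_8 \cap \cC_{14}$ supplies a sublattice $\lattice{h^2, P, T} \isom A_\tau$ inside $A(X)$ for some $\tau \in \ZZ$, so $\cC_8 \cap \cC_{14} = \bigcup_{\tau} \cC_\tau$, and the task reduces to identifying the realized values of $\tau$.

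First, since the Hodge--Riemann relations force the intersection form on $A(X)$ to be positive definite for any smooth cubic fourfold $X$, the lattice $A_\tau$ must itself be positive definite. Computing $\det A_\tau = -3\tau^2 + 8\tau + 32$ and imposing positivity restricts to $\tau \in \{-2, -1, 0, 1, 2, 3, 4\}$, with corresponding discriminants $d_X \in \{4, 21, 32, 37, 36, 29, 16\}$. I then rule out $\tau = -2$ and $\tau = 4$ via the Laza--Looijenga description of the image of the period map~\cite{laza,looijenga}, which in particular implies that the Hassett divisors $\cC_2$ and $\cC_6$ are empty. The discriminants of rank-2 sublattices of $A_\tau$ containing $h^2$ and generated by $v = bP + cT$ are represented by the binary form $q_\tau(b,c) = 3v^2 - (v \cdot h^2)^2 = 8b^2 + (6\tau - 8)bc + 14c^2$. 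One computes $q_{-2}(1,1) = 2$ and $q_4(1,-1) = 6$; since $\gcd(1,1) = \gcd(1,-1) = 1$, the corresponding rank-2 sublattices are primitive in $A_{-2}$ and $A_4$ respectively. Hence any smooth $X$ realizing $A_{-2}$ or $A_4$ would lie in $\cC_2$ or $\cC_6$, a contradiction.

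For the surviving values $\tau \in \{-1, 0, 1, 2, 3\}$, I establish nonemptiness of $\cC_\tau$ by appealing to Mayanskiy~\cite[Thm.~6.1,~Rem.~6.3]{lattices_mayanskiy}. The hypotheses to verify are (i) positive definiteness of $A_\tau$ (already done) and (ii) that $A_\tau$ admits no primitive rank-2 sublattice of discriminant 2 or 6 containing $h^2$. For (ii), I enumerate small values of $q_\tau$: in each of the five cases its minimum on $\ZZ^2 \setminus 0$ equals 8, so $q_\tau$ avoids $\{2, 6\}$. Irreducibility and codimension 2 of each $\cC_\tau$ then follow via a direct analogue of \cite[Thm.~3.1.2]{hassett:special-cubics}, and distinctness of the five components is immediate from the pairwise distinctness of their discriminants $\{21, 29, 32, 36, 37\}$.

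For the final claim that a general $X \in \cC_\tau$ satisfies $A(X) \isom A_\tau$, I argue that $\rk A(X) = 3$ for general such $X$: any additional algebraic class confines $X$ to the countable union of proper subvarieties $\cC_\tau \cap \cC_{A'}$, where $A'$ runs over rank-4 positive definite lattices containing $A_\tau$. Since $A_\tau \subset A(X)$ is primitive of full rank, this forces $A(X) = A_\tau$. The main obstacle is the careful invocation of Laza--Looijenga--Mayanskiy in the second step: one must certify both that the witnesses $(1,1)$ and $(1,-1)$ yield genuinely primitive rank-2 sublattices inside $A_{-2}$ and $A_4$, and that each of the surviving $A_\tau$ satisfies Mayanskiy's full list of realizability criteria for the algebraic lattice of a smooth cubic fourfold.
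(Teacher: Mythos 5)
Your overall strategy mirrors the paper's — reduce to classifying the realizable values of $\tau$, eliminate $\tau = -2, 4$ via Laza--Looijenga, and invoke Mayanskiy for the five remaining cases — and your rephrasing of the elimination step in terms of the emptiness of $\cC_2$ and $\cC_6$ is correct; it packages the same lattice-theoretic fact that the paper exhibits directly as roots (e.g.\ your primitive discriminant-$6$ sublattice $\lattice{h^2, P-T} \subset A_4$ corresponds exactly to the short root $h^2 + P - T$ in $A_{4,0}$). However, there are two genuine gaps in the remainder.

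First, you mis-state Mayanskiy's realizability criteria. The paper verifies six conditions from \cite[Thm.~6.1]{lattices_mayanskiy}, not two. Conditions 2) and 5) — evenness of two integral forms on $A_{\tau,0}$ and $A_\tau$ — happen to hold automatically for all $\tau$ (the displayed formulas \eqref{eq:roots} and \eqref{eq:alt_form} carry an explicit factor of $2$), so their omission is harmless. But condition 6) — that the finite quadratic form $q_{K_\tau}$ on the discriminant group $A_\tau^*/A_\tau$ has signature $\equiv 0 \bmod 8$, which via Nikulin \cite[Cor.~1.10.2]{nikulin} governs primitive embeddability into the cubic fourfold lattice compatibly with $h^2$ — is a substantive arithmetic constraint. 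It is not a consequence of positive definiteness nor of the absence of roots, and it is not the same condition as your item (ii). The paper computes $q_{K_\tau}$ for each of the five cases in table~\eqref{table} and checks the signature formula from \cite[Prop.~1.11.2]{nikulin}. Without this verification your appeal to Mayanskiy is not justified, and the nonemptiness of $\cC_\tau$ does not follow.

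Second, to conclude that the five $\cC_\tau$ are the \emph{only} irreducible components, one must also rule out proper finite overlattices $B \supsetneq A_\tau$ of rank $3$ in which $K_8$ and $K_{14}$ are still primitive. If such a $B$ were realizable as $A(X)$ for a smooth cubic fourfold, then $X$ would lie in $\cC_8 \cap \cC_{14}$ but \emph{not} in any $\cC_\tau$ (since $A_\tau \subset B = A(X)$ fails to be primitive), so $\cC_B$ would be an additional codimension-two component and the decomposition $\cC_8 \cap \cC_{14} = \bigcup_\tau \cC_\tau$ would already be false. The paper closes this: for $\tau \in \{-1,1,3\}$ the discriminant $d(A_\tau) \in \{21,37,29\}$ is squarefree so no proper overlattice exists, and for $\tau \in \{0,2\}$ every proper overlattice of $A_{\tau,0}$ is shown to contain long roots, which is forbidden by Looijenga--Laza. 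Your closing paragraph correctly shows $A(X) \isom A_\tau$ for general $X \in \cC_\tau$, but it says nothing about general $X \in \cC_8 \cap \cC_{14}$ that might a priori lie outside every $\cC_\tau$, so the exhaustion claim is not established.
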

\begin{proof}
By construction, $\cC_8 \cap \cC_{14}$ is the union of $\cC_\tau$ for
all $\tau \in \ZZ$.
First we determine the values of $\tau$ for which the component
$\cC_\tau$ is possibly
nonempty.  If $X$ is a smooth cubic fourfold, then $A(X)$ is positive
definite by the Riemann bilinear relations.  Hence, to be realized as
a sublattice of some $A(X)$, the lattice $A_\tau$ must be positive
definite, which by Sylvester's criterion, is equivalent to $A_\tau$
having positive discriminant.  As $d(A_\tau)=-3\tau^2+8\tau+32$, the
only values of $\tau$ making a positive discriminant are
$-2$, $-1$, $0$, $1$, $2$, $3$, and $4$.

Then, we prove that $\cC_\tau$ is empty for $\tau=-2,4$ by
demonstrating \emph{roots} (i.e., primitive vectors of norm 2) in
$A_{\tau,0} = \lattice{h^2}^{\perp}$ (see \cite[\S4~Prop.~1]{voisin},
\cite[\S2]{looijenga}, or \cite[Def.~2.16]{laza} for details on
roots).  Indeed, the vectors $(1, -3, 0)$ and $(0, -4, 1)$ form a
basis for $A_{\tau,0} \subset A_\tau$; for $\tau=-2$, we find short
roots $(-2,2,1)$ and $(2,-10,1)$; for $\tau=4$, we find short roots
$\pm(1,1,-1)$.  Hence $\cC_\tau$ is possibly nonempty only for $\tau
\in \{-1,0,1,2,3\}$.  The corresponding discriminants $d(A_\tau)$ are
 $21$, $32$, $37$, $36$, and $29$.

For the remaining values of $\tau$, we prove that $\cC_\tau$ is
nonempty.  To this end, we verify conditions 1)--6) of
\cite[Thm.~6.1]{lattices_mayanskiy}, proving that $A_\tau = A(X)$ for
some cubic fourfold $X$.   Condition 1) is true by definition.  For
condition 2),
letting $v = (x,-3x-4y,y) \in A_{\tau,0}$ we
see that
\begin{equation}
\label{eq:roots}
\inner{(v,v)}= 2\bigl(12 x^2 + (36 - 3 \tau) x y + (29 - 4 \tau) y^2\bigr)
\end{equation}
is even.  For condition 5), letting $w = (x,y,z) \in A_\tau$, we
compute that
\begin{equation}
\label{eq:alt_form}
\inner{(h^2,w)}^2 - \inner{(w,w)} = 
2\bigl( 3 x^2 - y^2 + z^2 + 2 x y + 8 x z + (4-\tau) y z \bigr)
\end{equation}
is even.  For conditions 3)--4), given each of the five values of
$\tau$, we use standard Diophantine techniques to prove the
nonexistence of short and long roots of \eqref{eq:roots}.

Finally, for condition 6), let $q_{K_\tau} : A^*_\tau/A_\tau \to
\QQ/2\ZZ$ be the discriminant form of \eqref{eq:alt_form}, restricted
to the discriminant group $A^*_\tau/A_\tau$ of the lattice $A_\tau$.
Appealing to Nikulin~\cite[Cor.~1.10.2]{nikulin}, it suffices to check
that the signature satisfies $\sgn(q_{K_\tau}) \equiv 0 \bmod 8$; cf.\
\cite[Rem.~6.3]{lattices_mayanskiy}.  Employing the notation of
\cite[Prop.~1.8.1]{nikulin}, we compute the finite quadratic form
$q_{K_\tau}$ in each case:
\begin{equation}
\label{table}
\renewcommand{\arraystretch}{1.2}
\begin{array}{|c||c|c|c|c|c|}
\hline
\tau  & -1 & 0  & 1  & 2  & 3   \\\hline
d(A_\tau)  & 21 & 32 & 37 & 36 & 29  \\\hline
A^*_\tau/A_\tau & \ZZ/3\ZZ \times \ZZ/7\ZZ & \ZZ/2\ZZ\times\ZZ/16\ZZ &
\ZZ/37\ZZ & \ZZ/2\ZZ\times\ZZ/2\ZZ\times\ZZ/9\ZZ & \ZZ/29\ZZ  \\\hline
q_{K_\tau} & q_1^3(3)\oplus q_1^7(7) & q_3^2(2)\oplus q_1^2(2^4) & q_\theta^{37}(37) & q_3^2(2)\oplus
q_1^2(2)\oplus q_1^3(3^2)  & q_\theta^{29}(29)\\\hline
\end{array}
\end{equation}
where $\theta$ represents a nonsquare class modulo the respective odd
prime.  In each case of \eqref{table}, we verify the signature
condition using the formulas in \cite[Prop.~1.11.2]{nikulin}.  
%
%
%
%
%

Finally, for the five values of $\tau$, we prove that $\cC_\tau$ is
irreducible.  As the rank of $A(X)$ is an upper-semicontinuous
function on $\cC$, the general cubic fourfold $X$ in $\cC_8 \cap
\cC_{14}$ has $A(X)$ of rank 3 (by the argument above), of which
$A_\tau$ is a finite index sublattice for some $\tau$.  Each proper
finite overlattice $B$ of $A_\tau$, such that $B$ (along with its
sublattices $K_8$ and $K_{14}$) is primitively embedded into
$H^4(X,\ZZ)$, will give rise to an irreducible component of $\cC_\tau$.
We will prove that no such proper finite overlattices exist.  For
$\tau \in \{21,37,29\}$, the discriminant of $A_\tau$ is squarefree,
so there are no proper finite overlattices.  In the case $\tau=0,2$,
we note that $B_0 = \lattice{h^2}^{\perp}$ is a proper finite
overlattice of the binary lattice $A_{\tau,0}$ (as $\lattice{h^2}
\subset B$ is assumed primitive).  We then directly compute that each
such $B_0$ has \emph{long roots} (i.e., vectors of norm 6 whose
pairing with any other vector is divisible by 3).
Therefore, no such proper finite overlattices exist.
\end{proof}

We now address the question of the (non)triviality of the Clifford
invariant.  This proves the second assertion of Theorem~\ref{thm:main}.

\begin{prop}\label{oddity}
Let $X$ be a general cubic fourfold in $\cC_{8}\cap \cC_{14}$ (so that
$A(X)$ has rank 3).  The Clifford invariant $\beta \in \Br(S)$ of $X$ is trivial if and only if
$\tau$ is odd.
\end{prop}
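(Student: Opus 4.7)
The plan is to apply the criterion already exploited in the proof of Proposition~\ref{prop:nontriviality}: the Clifford invariant $\beta \in \Br(S)$ is trivial if and only if there exists an algebraic class $R \in A(X)$ with $R.Q = 1$, where $Q = h^2 - P$ is the class of a fiber of the quadric surface bundle $\pi : \wt{X} \to \PP^2$. Since the hypothesis forces $A(X)$ to have rank $3$, Theorem~\ref{theorem:comps} lets me identify $A(X) \isom A_\tau$ for some $\tau \in \{-1,0,1,2,3\}$, with basis $h^2,P,T$ and Gram matrix \eqref{eq:hypothetical}.

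The key step is then a one-line parity computation. Writing an arbitrary class as $R = a h^2 + b P + c T$ with $a,b,c \in \ZZ$, one finds
$$
R.Q \;=\; R.h^2 - R.P \;=\; (3a+b+4c) - (a+3b+\tau c) \;=\; 2(a-b) + (4-\tau) c,
$$
so the problem reduces to deciding when the linear Diophantine equation $2(a-b) + (4-\tau)c = 1$ has an integer solution. Its solvability is controlled solely by the parity of $4-\tau$, hence by the parity of $\tau$.

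If $\tau$ is even, then the right-hand side is even for all choices of $a,b,c$, so no such $R$ exists and $\beta$ is nontrivial; alternatively, since $d(A_0) = 32$ and $d(A_2) = 36$ are both even, one may appeal directly to Proposition~\ref{prop:nontriviality}. If $\tau$ is odd, then $\tau - 3$ is even and taking $c = 1$ together with $a - b = (\tau-3)/2$ produces an explicit $R$ with $R.Q = 1$; for instance $R = 2P+T, P+T, T$ works for $\tau = -1,1,3$ respectively, as is verified by a direct check against \eqref{eq:hypothetical}.

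The argument is therefore essentially a parity count, and the only genuine content is the preceding identification $A(X) \isom A_\tau$ coming from Theorem~\ref{theorem:comps}; this is what guarantees that the search for $R$ may be restricted to integral combinations of $h^2,P,T$ rather than to some unknown finite overlattice. That said, I expect the most delicate verification to be in the odd cases, where one must exhibit an explicit $R$ and check that its presence in $A(X)$ is compatible with the $A_\tau$-structure, although this reduces to the parity observation above.
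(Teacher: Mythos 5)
Your proof is correct and follows essentially the same approach as the paper: both reduce the question to the criterion that $\beta$ is trivial if and only if some $R\in A(X)$ satisfies $R.Q=1$, and then settle this by a parity computation against the Gram matrix of $A_\tau$. The paper exhibits $P+T$ as the witness in the odd case (one finds $(P+T).Q = 2-\tau$, odd precisely when $\tau$ is odd) and invokes Proposition~\ref{prop:nontriviality} via the even discriminant of $A_\tau$ for the even case, whereas you carry out the same parity analysis uniformly for a general $R = ah^2+bP+cT$; the content is identical.
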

\begin{proof}
If $\tau$ is odd then, as in the proof of Proposition~\ref{prop:tau0},
$(P+T).Q = -\tau$ is odd, hence the Clifford invariant $\beta \in
\Br(S)$ is trivial by an application of the criteria in
\cite[Thm.~3.1]{hassett:rational_cubic} or
\cite[Prop.~4.7]{kuznetsov:cubic_fourfolds} (cf.\ the proof of
Proposition~\ref{prop:nontriviality}).  If $\tau$ is even, then
$A_d=A(X)$ has rank 3 and even discriminant, hence $\beta$ is
nontrivial by Proposition~\ref{prop:nontriviality}. 
\end{proof}

For $\tau=-1$, the component $\cC_{\tau}$ consists of cubic fourfolds
containing two disjoint planes; see
\cite[4.1.3]{hassett:special-cubics}.  We now give a geometric
description of the general member of the component $\cC_\tau$ for
$\tau=0$ (i.e., where $d_X=32$).

\begin{prop}
\label{prop:tau0}
Let $X$ be a smooth cubic fourfold containing a good plane $P$ and
having a tangent conic such that $A(X)$ has rank 3.  Then $X$ is in
the component $\cC_{\tau}$ for $\tau=0$.
\end{prop}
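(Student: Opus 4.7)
The plan is to pin down $\tau$ by combining the nontriviality of the Clifford invariant (from the tangent conic) with the discriminant formula of Corollary~\ref{claim}, and then to verify that $X$ does lie in $\cC_8 \cap \cC_{14}$ by constructing the required cycle class via Voisin's Abel--Jacobi correspondence.

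First I would use Corollary~\ref{claim} to translate the rank $3$ hypothesis on $A(X)$ into $\NS(S)$ having rank $2$. As in the proof of Proposition~\ref{prop:conic}, the tangent conic then gives $\NS(S) = \lattice{h_1, C_1}$ of discriminant $-8$, and Proposition~\ref{prop:conic} concludes that $\beta \in \Br(S)$ is nontrivial. By Proposition~\ref{oddity}, $\tau$ must be even, so from Theorem~\ref{theorem:comps} the only possibilities remaining are $\tau \in \{0, 2\}$, with $d(A_0) = 32$ and $d(A_2) = 36$. Invoking Corollary~\ref{claim} once more yields $d(A(X)) = 2^{2(\epsilon-1)} \cdot 8 \in \{8, 32\}$ for $\epsilon \in \{1, 2\}$. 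If $\tau = 2$ held, primitivity and rank equality of the embedding $A_2 \subset A(X)$ would force $A(X) = A_2$ of discriminant $36$, contradicting the above. So $\tau = 0$, and in particular $d(A(X)) = 32$ and $\epsilon = 2$.

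To complete the proof I would confirm $A(X) = A_0$ rather than some alternative positive definite rank $3$ lattice containing $\lattice{h^2, P}$ primitively with discriminant $32$ (for instance $K_8 \perp \lattice{4}$, which does not contain $K_{14}$). For this I would transport $C_1 \in \NS(S)$ to a rational Hodge class on $X$ via ${}^t\Phi \circ p^*$, and apply Voisin's intersection formula $\inner{}_X(x, y) = -\inner{}_S(\Phi(x), \Phi(y))$ from Proposition~\ref{prop:voisin-2}, together with the numerics $C_1^2 = -2$, $h_1 \cdot C_1 = 2$, and $P + Q = h^2$, to produce a class $T \in A(X)$ with $T \cdot h^2 = 4$, $T \cdot P = 0$, $T^2 = 10$. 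The main obstacle will be the integrality of $T$: the correspondence naturally outputs an element of $K = \lattice{h^2, P}^\perp$, and I must exploit the index-$2$ coset structure of Proposition~\ref{prop:voisin-2} to descend to the glued overlattice realizing $A_0$ rather than the non-glued $K_8 \perp \lattice{4}$. Once $A_0$ is realized as a primitive sublattice of $A(X)$, the rank $3$ hypothesis forces $A(X) = A_0$ and $X \in \cC_0$.
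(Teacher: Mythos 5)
The first half of your argument is circular: Proposition~\ref{oddity} only applies to cubic fourfolds already known to lie in $\cC_8\cap\cC_{14}$, since the parameter $\tau$ is defined via a class $T$ with $\lattice{h^2,T}\cong K_{14}$ inside $A(X)$. The hypotheses of Proposition~\ref{prop:tau0} only hand you $X\in\cC_8$ (from the plane); membership in $\cC_{14}$ is precisely what must be established. You cannot therefore read off ``$\tau$ is even'' from Proposition~\ref{oddity} and then intersect with Theorem~\ref{theorem:comps} --- those steps presuppose that some $A_\tau$ sits primitively in $A(X)$, which is the conclusion, not the hypothesis. You acknowledge this when you write ``to complete the proof I would confirm $A(X)=A_0$,'' but that makes the preceding reduction to $\tau\in\{0,2\}$ idle: once you construct the required $T$, you will have pinned down the lattice directly and will not have used Proposition~\ref{oddity} at all.

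The second half is the right idea but is not carried out. The paper avoids the explicit Abel--Jacobi transport entirely: since $\lattice{h^2,P}\subset A(X)$ is primitive and $d(A(X))\in\{8,32\}$, one can always choose some $T\in A(X)$ so that $\lattice{h^2,P,T}$ has discriminant $32$; adjusting $T$ by multiples of $P$ and $h^2-3P$ normalizes $h^2.T=4$ and reduces $P.T$ modulo $8$, leaving only $P.T\in\{0,4\}$ (with $T^2=10$ or $12$ respectively, the second being essentially your $K_8\perp\lattice{4}$ example). The alternative is then excluded by comparing the discriminant of $K\cap\lattice{h^2,P,T}$ with that of $\NS(S)\cap H^2(S,\ZZ)_0(-1)$ under the index-dividing-$2$ lattice inclusion from Corollary~\ref{claim}. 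This is a cleaner and genuinely non-circular route than transporting $C_1$ via ${}^t\Phi\circ p^*$ and wrestling with integrality, which you flag as ``the main obstacle'' but do not resolve. As written, your proposal has a real gap at the invocation of Proposition~\ref{oddity}, and the construction of $T$ that would repair it remains a sketch.
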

\begin{proof}
Since $X$ has a tangent conic and $A(X)$ has rank 3, $A(X)$ has
discriminant $8$ or $32$ and $X$ has nontrivial Clifford invariant by
Proposition~\ref{claim} and Corollary~\ref{prop:conic}.  As the
sublattice $\lattice{h^2,P} \subset A(X)$ is primitive, we can choose
a class $T \in A(X)$ such that $\lattice{h^2,P,T} \subset A(X)$ has
discriminant 32.  Adjusting $T$ by a multiple of $P$, we can assume
that $h^2.T=4$.  Write $\tau = P.T$.

Adjusting $T$ by multiples of $h^2-3P$ keeps $h^2.T=4$ and adjusts
$\tau$ by multiples of 8.  A similar trick is employed in
\cite[Prop.~4.2]{AT_kuz=has}.  
The discriminant being 32, we are left with two possible choices ($\tau=0,4$) for
the Gram matrix of $\lattice{h^2,P,T}$ up to isomorphism:
$$
\begin{array}{cccc}
 & h^2 & P & T\\
h^2 & 3 & 1 & 4\\
P & 1 & 3 & 0 \\
T & 4 & 0 & 10
\end{array}
\qquad
\begin{array}{cccc}
 & h^2 & P & T\\
h^2 & 3 & 1 & 4\\
P & 1 & 3 & 4 \\
T & 4 & 4 & 12
\end{array}
$$
In these cases, we compute that $K \cap \lattice{h^2,P,T}$ (i.e., the
orthogonal complement of $\lattice{h^2,P}$ in $\lattice{h^2,P,T}$) is
generated by $3h^2-P-2T$ and $h^2+P-T$ and has discriminant $16$ and
$5$, respectively.  We calculate that $\NS(S) \cap
H^2(S,\ZZ)_0$ (i.e., the orthogonal complement of $\lattice{h_1}$ in
$\NS(S)$) is generated by $h_1-C_1$ and has discriminant $-4$ (see
Corollary~\ref{prop:conic} for definitions).  Arguing as in the proof
of Proposition~\ref{claim}, there is a lattice inclusion $\Phi(K \cap
\lattice{h^2,P,T}) \subset \NS(S) \cap H^2(S,\ZZ)_0(-1)$ having index
dividing 2, which rules out the second case above by comparing
discriminants.
\end{proof}

In Proposition~\ref{oddity}, we isolate three classes of smooth cubic
fourfolds $X \in \cC_8 \cap \cC_{14}$ with \emph{trivial} Clifford
invariant.  While the component $\cC_\tau$ for $\tau=-1$ is in the
complement of the pfaffian locus (see \cite[Prop.~1b]{tregub:new}), we
wonder if the pfaffian locus is dense in the other four
components.

\section{The twisted derived equivalence}
\label{sec:hpd}

Homological projective duality (HPD) can be used to obtain a
significant semiorthogonal decomposition of the derived category of a
pfaffian cubic fourfold.  As the universal pfaffian variety is
singular, a noncommutative resolution of singularities is required to
establish HPD in this case.  A \emph{noncommutative resolution of
singularities} of a scheme $Y$ is a coherent $\ko_Y$-algebra $\kr$
with finite homological dimension that is generically a matrix algebra
(these properties translate to ``smoothness'' and ``birational to
$Y$'' from the categorical language).  We refer to
\cite{kuznetsov:hpd_general} for details on HPD. The following is a straightforward application of HPD for Grassmannians \cite{kuznetsov:hpd_for_grassmann}.

\begin{theorem}
\label{hpd}
Let $W$ be a $\CC$-vector space of dimension 6 and $Y \subset
\PP(\wedge^2 W\dual)$ the universal pfaffian cubic hypersurface.
There exists a noncommutative resolution of singularities $(Y,\kr)$
that is HP dual to the grassmannian $\mathrm{Gr}(2,W)$.  In
particular, the bounded derived category of a smooth pfaffian cubic
fourfold $X$ admits a semiorthogonal decomposition
$$
\Db(X)= \langle\Db(S'), \ko_X, \ko_X(1), \ko_X(2)  \rangle,
$$
where $S'$ is a smooth K3 surface of degree 14.  In particular,
$\cat{A}_X \isom \Db(S')$.
\end{theorem}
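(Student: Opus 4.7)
The plan is to apply Kuznetsov's homological projective duality (HPD) in its explicit form for the Pl\"ucker-embedded Grassmannian $\mathrm{Gr}(2,W) \subset \PP(\wedge^2 W)$. The first step is to construct the noncommutative resolution $(Y,\kr)$. The pfaffian cubic $Y \subset \PP(\wedge^2 W\dual) \isom \PP^{14}$ parametrizes $6 \times 6$ skew-symmetric forms of rank at most $4$ and is singular along the rank-$2$ locus, which is itself the Pl\"ucker-embedded $\mathrm{Gr}(2,W\dual)$. Following Kuznetsov, I would build $\kr$ as an endomorphism algebra of a natural coherent sheaf on $Y$ arising from the tautological rank-$4$ bundle over the smooth part of $Y$; the main points to check are that $\kr$ has finite homological dimension and is Azumaya over the smooth locus, so that $(Y,\kr)$ is a genuine noncommutative resolution.

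The core step is to establish HPD between $\mathrm{Gr}(2,W)$ and $(Y,\kr)$. This requires producing a rectangular Lefschetz decomposition of $\Db(\mathrm{Gr}(2,W))$ with respect to the Pl\"ucker polarization, generated by an admissible subcategory spanned by exterior powers of the tautological subbundle, and a matching Lefschetz structure on $\Db(Y,\kr)$. One then verifies that mutations of a Beilinson/Kapranov-style full exceptional collection on $\PP(\wedge^2 W\dual)$, restricted along the universal hyperplane section, reproduce the expected HP dual data. This is the main obstacle: it is the content of the principal theorem of \cite{kuznetsov:hpd_for_grassmann} and involves delicate bookkeeping of homogeneous bundles, mutation calculations, and Koszul-type resolutions of the singular locus.

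With HPD in hand, the semiorthogonal decomposition follows from the HPD linear section theorem. Write $X = Y \cap \PP(L)$ for a $6$-dimensional subspace $L \subset \wedge^2 W\dual$, so $\PP(L) = \PP^5$. The HPD pairing associates to $X$ the transverse linear section $S' = \mathrm{Gr}(2,W) \cap \PP(L^\perp) \subset \PP^8$; since $\mathrm{Gr}(2,6)$ has dimension $8$ and degree $14$ in its Pl\"ucker embedding, for generic $L$ the variety $S'$ is a smooth surface of degree $14$, a K3 by adjunction. The HPD theorem then delivers
\[
\Db(X) = \langle \Db(S',\kr|_{S'}), \ko_X, \ko_X(1), \ko_X(2) \rangle,
\]
where the exceptional collection is inherited from the truncated Lefschetz decomposition on $\PP(\wedge^2 W\dual)$. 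A final transversality check shows that $\kr$ restricts to a matrix algebra on the smooth section $S'$, giving $\Db(S',\kr|_{S'}) \isom \Db(S')$; comparing with Kuznetsov's decomposition $\Db(X) = \langle \cat{A}_X, \ko_X, \ko_X(1), \ko_X(2)\rangle$ then identifies $\cat{A}_X \isom \Db(S')$.
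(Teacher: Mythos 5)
Your overall strategy---quoting Kuznetsov's HPD between $\mathrm{Gr}(2,W)$ and a noncommutative resolution of the universal pfaffian cubic $Y$, then applying the linear-section theorem---matches the paper's. However, there is a concrete error in the final step. The noncommutative resolution $\kr$ lives on $Y \subset \PP(\wedge^2 W\dual)$, whereas the K3 surface $S'$ is a linear section of $\mathrm{Gr}(2,W) \subset \PP(\wedge^2 W)$, which is the honest, commutative side of the duality. There is no sheaf $\kr$ sitting over $\mathrm{Gr}(2,W)$, so the expression $\Db(S',\kr|_{S'})$ does not make sense, and the ``transversality check'' you propose is on the wrong side. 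The point that actually needs verifying, and that the paper states explicitly, is that $\kr$ restricted to the smooth linear section $X = Y \cap \PP^5$ of $Y$ is Morita equivalent to $\ko_X$; this is what lets you replace $\Db(X,\kr|_X)$ by $\Db(X)$ in the output of the HPD linear-section theorem. The common HPD component is then just $\Db(S')$ (the Grassmannian side contributes no further exceptional objects at this dimension, and $\Db(S')$ is indecomposable since $S'$ is a K3).

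A smaller omission: applied to $X$ as a linear section of the dual side $Y$, the linear-section theorem places the common component on the right and the exceptionals on the left, producing $\Db(X)=\langle \ko_X(-3), \ko_X(-2), \ko_X(-1), \Db(S')\rangle$, not the form stated in the theorem. To arrive at $\Db(X)=\langle \Db(S'), \ko_X, \ko_X(1), \ko_X(2)\rangle$ and identify the first factor with $\cat{A}_X$, one still needs to twist by $\ko_X(3)$ and then mutate $\Db(S')$ to the left past its left orthogonal complement. Your proposal jumps directly to the final form, which silently elides this last comparison with Kuznetsov's definition of $\cat{A}_X$.
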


Assuming Theorem~\ref{thm:main}, we can now give a proof of Corollary~\ref{thm:2}.

\begin{proof}[Proof of Corollary~\ref{thm:2}]
By Theorem~\ref{thm:main}, $X$ is a smooth pfaffian cubic fourfold
containing a good plane with nontrivial Clifford invariant $\beta \in
\Br(S)$.  Being pfaffian, $X$ is rational. Let $S'$ be the K3 surface
of degree 14 arising from Theorem~\ref{hpd} via projective duality.
Then by \cite[Thm.~4.3]{kuznetsov:cubic_fourfolds} and
Theorem~\ref{hpd}, the category $\cat{A}_X$ is equivalent to both
$\Db(S,\beta)$ and $\Db(S')$.
\end{proof}

\begin{remark}
\label{rem:no_twist}
By \cite[Rem.~7.10]{huybrechts-stellari}, given any K3 surface $S$ and
any nontrivial $\beta \in \Br(S)$, there is \emph{no} equivalence
between $\Db(S,\beta)$ and $\Db(S)$.  Thus any $X$ as in
Corollary~\ref{thm:2} validates Kuznetsov's conjecture, 
but not via the K3 surface $S$. Moreover, $S$ and $S'$ are
twisted Fourier--Mukai partners: by
\cite[Thm.~5.1]{can-stel}, the equivalence $\Db(S,\beta)\cong \Db(S')$
is a Fourier--Mukai functor whose kernel is a $\beta^{-1} \boxtimes
\ko_{S'}$-twisted complex on $S \times S'$. Hence, by \cite[Thm. 4.3]{huybrechts-stellari}, 
$S$ and $S'$ have Hodge
isogenous (twisted) transcendental lattices. 
\end{remark}

\section{A pfaffian containing a plane}
\label{sec:example}

In this section, we prove the final claim of Theorem~\ref{thm:main} by
exhibiting a smooth pfaffian cubic fourfold $X$ containing a good
plane, having a tangent conic, and such that $A(X)$ has rank 3.
Indeed, by Propositions~\ref{prop:nontriviality} and \ref{prop:tau0},
such an $X$ has nontrivial Clifford invariant and is in the $\tau=0$
(i.e., $d_X=32$) component of $\cC_8 \cap \cC_{14}$.  In particular,
the pfaffian locus nontrivially intersects, and hence in dense in
(since it is open in $\cC_{14}$), the component $\cC_\tau$ with
$\tau=0$.  

\begin{theorem}
\label{main}
Let A be the $6\times 6$ antisymmetric matrix of linear forms in $\QQ[x,y,z,u,v,w]$ 
$$
\left(
\begin{array}{cccccc}
 0  & y + u & x + y + u & u             & z                 & y + u + v\\
    &  0    & x + y + z & x + z + u + w & y + z + u + v + w & x + y + z + u + v + w\\
    &       &  0        & x + y + u + w & x + y + u + v + w & x + y + z + v + w\\
    &       &           &             0 & x + u + v + w     & x + u + w\\
    &       &           &               &  0                & z + u + w\\
    &       &           &               &                   & 0
\end{array}
\right)
$$
and let $X \subset \PP^5$ be the
cubic fourfold defined by the vanishing of the pfaffian of $A$:
\begin{align*}
&(x - 4y - z)u^2 + (-x - 3y)uv + (x - 3y)uw + (x - 2y - z)vw - 2yv^2 + xw^2\\
~& + (2x^2 + xz - 4y^2 + 2z^2)u + (x^2 - xy - 3y^2 + yz - z^2)v + (2x^2 + xy + 3xz - 3y^2 + yz)w \\
~& + x^3 + x^2y + 2x^2z - xy^2 + xz^2 - y^3 + yz^2 - z^3.
\end{align*}
Then:
\begin{enumerate}
\item \label{main:a} $X$ is smooth, rational, and contains the plane $P = \{ x=y=z=0\}$.

\item \label{main:b} The discriminant divisor $D \subset \PP^2$ of the 
quadric surface bundle $\pi : \wt{X} \to \PP^2$ is the sextic curve
given by the vanishing of:
\begin{align*}
d = x^6 & + 6x^5y + 12x^5z + x^4y^2 + 22x^4yz + 28x^3y^3 - 38x^3y^2z
      + 46x^3yz^2 + 4x^3z^3 \\
     & + 24x^2y^4 - 4x^2y^3z - 37x^2y^2z^2 
      -36x^2yz^3 - 4x^2z^4 + 48xy^4z - 24xy^3z^2 \\
      & + 34xy^2z^3 + 4xyz^4 + 20y^5z + 20y^4z^2 - 8y^3z^3 - 11y^2z^4
      - 4yz^5.
\end{align*}
This curve is smooth; in particular, $\pi$ has simple degeneration and
the discriminant cover is a smooth K3 surface $S$ of  degree 2.

\item \label{main:c} The conic $C \subset \PP^2$ defined by the vanishing of $x^2+yz$
is tangent to the degeneration divisor $D$ at six points (five of
which are distinct).

\item \label{main:d} The K3 surface $S$ has (geometric) Picard rank $2$.
\end{enumerate}
In particular, the Clifford invariant of $X$ is geometrically nontrivial.
\end{theorem}

\begin{proof}
An application of the jacobian criterion shows that $X$ and $D$ are
smooth.  The inclusion $P \subset X$ is checked by inspecting the
expression for $\pf(A)$; every monomial is divisible by $x$, $y$ or
$z$.  Rationality comes from being a pfaffian cubic fourfold.
Smoothness of $D$ and $X$ implies that $\pi$ has simple degeneration;
see
\cite[Rem.~7.1]{hassett_varilly:K3} or
\cite[Prop.~1.6]{ABB:fibrations}.  This proves parts \eqref{main:a}
and \eqref{main:b}.

For part \eqref{main:c}, we write the equation for the degeneration divisor as $d =
(x^2+yz)f + g^2$, where
\begin{align*}
f = {} & x^4 + 6x^3y + 12x^3z + x^2y^2 + 21x^2yz - 25x^2z^2 +
28xy^3 \\
 & - 24xy^2z + 34xyz^2
       + 4xz^3 + 20y^4 - 5y^3z - 8y^2z^2 - 11yz^3 - 4z^4,\\
g = {} & 2xy^2 + 5y^2z - 5x^2z.
\end{align*}
Hence the conic $C \subset \PP^2$ defined by $x^2+yz$ is tangent to
$D$ along the zero-dimensional scheme of length 6 given by the
intersection of $C$ and the vanishing locus of $g$. 

For part \eqref{main:d}, the surface $S$ is the smooth sextic in
$\PP(1,1,1,3) = \Proj\QQ[x,y,z,w]$ given by $w^2 = d(x,y,z)$, which is
the double cover of $\PP^2$ branched along $D$.  In these coordinates,
the discriminant cover $f : S \to \PP^2$ is simply the restriction to
$S$ of the projection $\PP(1,1,1,3) \dasharrow \PP^2$ away from the
hyperplane $\{w=0\}$.  Let $C \subset \PP^2$ be the conic from part
\eqref{main:c}.  As discussed in Corollary~\ref{prop:conic}, the curve
$f^*C$ consists of two $({-2})$-curves $C_1$ and $C_2$.
These curves generate a sublattice of
$\NS(S)$ of rank $2$. Hence $\rho(\overline{S})\geq \rho(S) \geq 2$, where $\overline{S}=S \times_\QQ \CC$.

We show next that $\rho(\overline{S}) \leq 2$. Write $S_p$ for the
reduction mod $p$ of $S$ and $\overline{S}_p = S_p \times_{\FF_p}
\overline{\FF}_p$. Let $\ell \neq 3$ be a prime and write $\phi(t)$
for the characteristic polynomial of the action of absolute Frobenius
on $H^2_{\textrm{\'et}}(\overline{S}_3,\QQ_\ell)$. Then
$\rho(\overline{S}_3)$ is bounded above by the number of roots of
$\phi(t)$ that are of the form $3\zeta$, where $\zeta$ is a root of
unity \cite[Prop.~2.3]{van_luijk}. Combining the Lefschetz trace
formula with Newton's identities and the functional equation that
$\phi(t)$ satisfies, it is possible calculate $\phi(t)$ from knowledge
of $\#S(\FF_{3^n})$ for $1 \leq n \leq 11$; see \cite{van_luijk} for
details.

Let $\widetilde{\phi}(t) = 3^{-22}\phi(3t)$, so that the number of roots of $\widetilde{\phi}(t)$ that are roots of unity gives an upper bound for $\rho(\overline{S}_3)$. Using {\tt Magma}, we compute
\[
\widetilde{\phi}(t) = 
\frac{1}{3}(t - 1)^2(3t^{20} + t^{19} + t^{17} + t^{16} + 2t^{15} + 3t^{14} + t^{12} + 3t^{11}
				  + 2t^{10} + 3t^9 + t^8 + 3t^6 + 2t^5 + t^4 + t^3 + t + 3)
\]
The roots of the degree $20$ factor of $\widetilde{\phi}(t)$ are not
integral, and hence they are not roots of unity.  We conclude that
$\rho(\overline{S}_3) \leq 2$.  By \cite{van_luijk}, we have
$\rho(\overline{S}) \leq \rho(\overline{S}_3)$, so $\rho(\overline{S})
\leq 2$. It follows that $S$ (and $\overline{S}$) has Picard rank 2.
This concludes the proof of part \eqref{main:d}.
Finally, the nontriviality of the Clifford invariant follows from
Proposition~\ref{prop:nontriviality} and Corollary~\ref{prop:conic}.
\end{proof}

A satisfying feature of Theorem~\ref{main} is that we can write out a
representative of the Clifford invariant of $X$ explicitly, as a
quaternion algebra over the function field of the K3 surface $S$.  We
first prove a handy lemma, of independent interest for its arithmetic
applications (see e.g.,
\cite{hassett_varilly:K3_hasse,hassett_varilly:K3}).

\begin{lemma}
\label{lem:handy}
Let $K$ be a field of characteristic $\neq 2$ and $q$ a nondegenerate
quadratic form of rank 4 over $K$ with discriminant extension $L/K$.
For $1 \leq r \leq 4$ denote by $m_r$ the determinant of the
leading principal $r \times r$ minor of the symmetric Gram matrix of $q$.
Then the class $\beta \in \Br(L)$ of the even Clifford algebra of $q$
is the quaternion algebra $(-m_2, -m_1 m_3)$.
\end{lemma}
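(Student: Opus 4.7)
The plan is to reduce to a diagonalization of $q$ keyed to the leading principal minors, then compute the even Clifford algebra of a diagonal rank-$4$ form directly, and finally clean up using two elementary quaternion symbol identities.

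First, I would observe that (under the implicit assumption $m_1, m_2, m_3 \neq 0$ that is needed for the statement to make sense) Gram--Schmidt orthogonalization applied to the given ordered basis diagonalizes the form as $q \simeq \langle m_1,\, m_2/m_1,\, m_3/m_2,\, m_4/m_3 \rangle$, the successive denominators being exactly the previous leading principal minors. Thus it suffices to treat $q = \langle a_1, a_2, a_3, a_4 \rangle$ with orthogonal basis $e_1, \ldots, e_4$ and to show that $C_0(q)$ is isomorphic, as a central simple algebra over its center $L = K(\sqrt{a_1 a_2 a_3 a_4})$, to $(-a_1 a_2,\, -a_1 a_3)_L$.

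The core of the argument is to exhibit a concrete quaternion subalgebra. Setting $x := e_1 e_2$ and $y := e_1 e_3$, direct calculation in the Clifford algebra gives $x^2 = -a_1 a_2$, $y^2 = -a_1 a_3$, and $xy = -yx$, so the $K$-subalgebra $Q$ generated by $x, y$ is the quaternion algebra $(-a_1 a_2,\, -a_1 a_3)_K$. Similarly, $z := e_1 e_2 e_3 e_4$ satisfies $z^2 = a_1 a_2 a_3 a_4 = \operatorname{disc}(q)$, so $z$ generates the center $L$ of $C_0(q)$ over $K$. A short computation expresses each remaining $K$-basis element of $C_0(q)$, namely $e_1 e_4$, $e_2 e_4$, $e_3 e_4$, as a $K^\times$-multiple of a product $z \cdot q'$ with $q' \in Q$, showing that the natural $L$-algebra map $Q \otimes_K L \to C_0(q)$ is surjective, hence an isomorphism by counting $L$-dimensions ($4$ on both sides). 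Therefore $\beta$ is represented by $(-a_1 a_2,\, -a_1 a_3)_L$ in $\Br(L)$.

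Finally, substituting back the values from the diagonalization gives $a_1 a_2 = m_2$ and $a_1 a_3 = m_1 m_3/m_2$. The symbol identity $(\alpha, \beta \gamma^2) = (\alpha, \beta)$, combined with $(\alpha, -\alpha) = 1$ (which specializes to $(-m_2, m_2) = 1$), yields $(-m_2,\, -m_1 m_3/m_2)_L = (-m_2,\, -m_1 m_3)_L$, which is the claimed formula.

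The main obstacle I anticipate is pinning down the sign conventions for the Clifford algebra (the precise values of $x^2$, $y^2$, $z^2$) and for the discriminant extension $L/K$, so that they match the conventions used in the paper's references; once aligned, every step reduces to either a short direct computation inside $C_0(q)$ or an elementary identity in the Brauer group.
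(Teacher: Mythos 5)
Your proof is correct, but it takes a genuinely different route from the paper's after the shared first step. Both arguments begin by diagonalizing $q \simeq \langle m_1, m_2/m_1, m_3/m_2, m_4/m_3\rangle$ (the paper via iterated symmetric Gaussian elimination, you via Gram--Schmidt; these are the same device). From there the paper rewrites $q$, modulo squares, as $\langle m_1\rangle \otimes \langle 1, m_2, m_1m_2m_3, m_1m_3m_4\rangle$, passes to $L = K(\sqrt{m_4})$, recognizes the resulting form as similar to the norm form $\langle 1, m_2, m_1m_3, m_1m_2m_3\rangle$ of $(-m_2,-m_1m_3)$, and then invokes the classical dictionary between rank-$4$ forms of trivial discriminant and quaternion algebras via their even Clifford algebra. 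You instead stay inside $C_0(q)$: you exhibit the quaternion subalgebra $Q = K\langle e_1e_2, e_1e_3\rangle \cong (-a_1a_2,-a_1a_3)_K$, identify the center as $K[z]$ with $z = e_1e_2e_3e_4$, $z^2 = a_1a_2a_3a_4$, and verify $Q\otimes_K L \xrightarrow{\sim} C_0(q)$ by exhibiting the full $K$-basis $\{1, x, y, xy, z, zx, zy, zxy\}$ and counting dimensions, then finish with $(-m_2, -m_1m_3/m_2) = (-m_2, -m_1m_2m_3) = (-m_2,-m_1m_3)$ using $(a,bc^2)=(a,b)$ and $(a,-a)=1$. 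Your version is longer but more self-contained and constructive (it even produces explicit generators of the quaternion algebra inside $C_0(q)$), whereas the paper's is shorter at the cost of leaning on the classical similarity-to-norm-form correspondence. Both correctly require the implicit nonvanishing of $m_1, m_2, m_3$, which you note.
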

\begin{proof}
On $n\times n$ matrices $M$ over $K$, symmetric gaussian elimination is the
following operation:
$$
M=
\begin{pmatrix}
a & v^t \\
v & A
\end{pmatrix}
\mapsto
\begin{pmatrix}
a & 0 \\
0 & A - a\inv vv^t
\end{pmatrix}
$$
where $a \in K^\times$, $v \in K^{n-1}$ is a column vector, and $A$ is
an $(n-1)\times(n-1)$ matrix over $K$.  Then $m_1=a$ and the element
in the first row and column of $A - a\inv vv^t$ is precisely
$m_2/m_1$.  By induction, $M$ can be diagonalized, using symmetric
gaussian elimination, to the matrix
$$
\text{diag}({ m_1, m_2/m_1, \dotsc, m_{n}/m_{n-1} }).
$$ 
For $q$ of rank 4 with symmetric Gram matrix $M$, we have 
$$
q = \quadform{m_1} \tensor \quadform{1,m_2,m_1 m_2 m_3, m_1 m_3 m_4}
$$ 
so that over $L = K(\sqrt{m_4})$, we have that $q\tensor_K L =
\quadform{m_1} \tensor \quadform{1,m_2,m_1 m_3, m_1 m_2 m_3}$, which
is similar to the norm form of the quaternion $L$-algebra with symbol
$(-m_2,-m_1 m_3)$.  Thus the even Clifford algebra of $q$ is Brauer
equivalent to $(-m_2,-m_1 m_3)$ over $L$.
\end{proof}

\begin{prop}
The Clifford invariant of the fourfold $X$ of Theorem~\ref{main} is
represented by the unramified quaternion algebra $(b,ac)$ over the
function field of the K3 surface $S$, where
$$
a = x - 4y - z, \quad
b = x^2 + 14xy - 23y^2 - 8yz,
$$
and
$$
c = 3x^3 + 2x^2y - 4x^2z + 8xyz + 3xz^2 - 16y^3 - 11y^2z - 8yz^2 - z^3.
$$
\end{prop}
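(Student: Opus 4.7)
The approach is to exhibit the rank $4$ quadratic form over $K = \QQ(\PP^2)$ whose associated quadric bundle is $\pi : \wt{X} \to \PP^2$, compute the first three leading principal minors of its Gram matrix, and then apply Lemma~\ref{lem:handy}. Since every monomial of $\pf(A)$ is divisible by $x$, $y$, or $z$, parametrizing the $\PP^3$ spanned by $P = \{x = y = z = 0\}$ and the point $[x_0:y_0:z_0:0:0:0]$ by $[tx_0 : ty_0 : tz_0 : u : v : w]$ and dividing by $t$ produces a residual quadratic form
\[
q(t,u,v,w) = q_0(u,v,w) + t\, \ell(u,v,w) + t^2 c_0(x_0,y_0,z_0),
\]
whose ten coefficients over $K$ can be read directly off the three homogeneous components of $\pf(A)$ grouped by degree in $(u,v,w)$ in Theorem~\ref{main}.

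Ordering the variables as $(u,v,w,t)$, I would let $M$ denote the symmetric Gram matrix of $q$ and $m_r$ its leading $r\times r$ principal minor. The first minor is $m_1 = x - 4y - z = a$ directly. A short $2 \times 2$ determinant gives
\[
m_2 = (x-4y-z)(-2y) - \tfrac14(-x-3y)^2 = -\tfrac14\bigl(x^2 + 14xy - 23y^2 - 8yz\bigr) = -\tfrac{b}{4},
\]
so $-m_2 \equiv b \pmod{(K^{\times})^2}$. A longer but routine expansion of the $3 \times 3$ leading principal minor should then give
\[
m_3 = -\tfrac14\bigl(3x^3 + 2x^2y - 4x^2z + 8xyz + 3xz^2 - 16y^3 - 11y^2z - 8yz^2 - z^3\bigr) = -\tfrac{c}{4},
\]
whence $-m_1 m_3 = ac/4 \equiv ac \pmod{(K^{\times})^2}$. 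Lemma~\ref{lem:handy} will then identify the class of the even Clifford algebra of $q$ with the quaternion symbol $(-m_2, -m_1 m_3) = (b, ac)$ over the discriminant extension $L = K(\sqrt{m_4})$, where $m_4 = \det M$.

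To conclude, I would verify that $L = K(S)$. Since $\det M$ and $d$ both cut out the discriminant divisor $D \subset \PP^2$, they differ by a rational constant $\lambda \in \QQ^\times$; a quick comparison of leading $x^6$ coefficients yields $\lambda = 16$, a square in $\QQ^\times$, so $K(\sqrt{m_4}) = K(\sqrt{d}) = K(S)$ and the symbol $(b, ac)$ indeed lives in $\Br(K(S))$. Unramifiedness is automatic at this point: by Theorem~\ref{main}(b), $\pi$ has simple degeneration, so the even Clifford algebra $\kc_0$ is Azumaya on $S$ (as recalled in \S\ref{sec:lattices}), and hence its class in $\Br(K(S))$ is unramified.

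The main obstacle is the $3 \times 3$ minor computation, which is a bookkeeping exercise that produces a cubic polynomial in $x, y, z$ with small integer coefficients and is most safely verified in a computer algebra system. One small subtlety is that Lemma~\ref{lem:handy} implicitly requires $m_1, m_2, m_3 \neq 0$ for its symmetric gaussian elimination argument, but this is immediate from the explicit nonzero expressions for $a, b, c$.
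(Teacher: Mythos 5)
Your proof is correct and follows essentially the same route as the paper: read off the Gram matrix of the rank-$4$ quadratic form underlying $\pi$, compute the leading principal minors, and apply Lemma~\ref{lem:handy} to get the symbol $(-m_2,-m_1 m_3)=(b,ac)$ over $k(S)$. The only cosmetic difference is a global factor of $2$ in the Gram-matrix normalization (the paper cites a matrix with doubled diagonal entries, yielding $m_1=2a$, $m_2=-b$, $m_3=-2c$ versus your $m_1=a$, $m_2=-b/4$, $m_3=-c/4$), which rescales each $m_r$ by $2^r$ and hence leaves the quaternion symbol unchanged modulo squares.
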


\begin{proof}
Projecting with center the plane $P$, we obtain a quadratic form $(\ko_{\PP^2}^3\oplus \ko_{\PP^2}(-1),q,\ko_{\PP^2}(1))$ of rank 4 over $\PP^2$ associated to the quadric bundle $\pi : \widetilde{X} \to \PP^2$,
see \cite[\S4.2]{hassett_varilly:K3} or
\cite[\S4]{kuznetsov:cubic_fourfolds} for the computation of the Gram
matrix.  Since $S$ is regular, $\Br(S) \to \Br(k(S))$ is injective;
see \cite{auslander_goldman} or
\cite[Cor.~1.10]{grothendieck:Brauer_II}.  By functoriality of the
Clifford algebra, the generic fiber $\beta \tensor_{S} k(S) \in
\Br(k(S))$ is represented by the even Clifford algebra of the generic
fiber $q\tensor_{\PP^2} {k(\PP^2)}$.  Thus we can perform our
calculations in the function field $k(S)$.  In the notation of
Lemma~\ref{lem:handy}, we have $m_1=2a$, $m_2=-b$, and $m_3=-2c$, and
the formulas follow immediately.
\end{proof}

\begin{remark}
Contrary to the situation in \cite{hassett_varilly:K3}, the
transcendental Brauer class $\beta \in \Br(S)$ is \emph{constant} when
evaluated on $S(\QQ)$; this suggests that arithmetic invariants do not
suffice to witness the nontriviality of $\beta$ in this case.  Indeed,
using elimination theory, we find that the odd primes $p$ of bad
reduction of $S$ are 5, 23, 263, 509, 1117, 6691, 3342589,
197362715625311, and 4027093318108984867401313726363.  For each odd
prime $p$ of bad reduction, we compute that the singular locus of
$\overline{S}_p$ consists of a single ordinary double point.  Thus by
\cite[Prop.~4.1,~Lemma~4.2]{hassett_varilly:K3_hasse}, the local
invariant map associated to $\beta$ is constant on $S(\QQ_p)$, for all
odd primes $p$ of bad reduction.  By an adaptation of
\cite[Lemma~4.4]{hassett_varilly:K3_hasse}, the local invariant map is
also constant for odd primes of good reduction.

At the real place, we prove that $S(\RR)$ is connected, hence the
local invariant map is constant.  To this end, recall that the set of
real points of a smooth hypersurface of even degree in $\PP^2(\RR)$
consists of a disjoint union of \emph{ovals} (i.e., topological
circles, each of whose complement is homeomorphic to a union of a disk
and a M\"obius band, in the language of real algebraic geometry).  In
particular, $\PP^2(\RR) \smallsetminus D(\RR)$ has a unique
nonorientable connected component $R$.  By graphing an affine chart of
$D(\RR)$, we find that the point $(1:0:0)$ is contained in $R$.  We
compute that the map projecting from $(1:0:0)$ has four real critical
values, hence $D(\RR)$ consists of two ovals.  These ovals are not
nested, as can be seen by inspecting the graph of $D(\RR)$ in an
affine chart.  The Gram matrix of the quadratic form, specialized at
$(1:0:0)$, has positive determinant, hence by local constancy, the
equation for $D$ is positive over the entire component $R$ and
negative over the interiors of the two ovals (since $D$ is smooth).
In particular, the map $f : S(\RR) \to \PP^2(\RR)$ has empty fibers
over the interiors of the two ovals and nonempty fibers over $R
\subset \PP^2(\RR)$ where it restricts to a nonsplit unramified cover
of degree 2, which must be the orientation double cover of $R$ since
$S(\RR)$ is orientable (the K\"ahler form on $S$ defines an
orientation). In particular, $S(\RR)$ is connected.

This shows that $\beta$ is constant on $S(\QQ)$.  We believe that the
local invariant map is also constant at the prime $2$, though this
must be checked with a brute force computation.
\end{remark}


\end{document}